\documentclass[12pt]{article}

\usepackage[margin=1.30in]{geometry}
\usepackage{amsmath,amssymb,amsthm,mathtools}
\usepackage{xcolor}
\usepackage{microtype}
\usepackage{authblk}
\usepackage[symbol]{footmisc}

\usepackage{newtxtext,newtxmath}
\usepackage[colorlinks=true,linkcolor=black,citecolor=black,urlcolor=black]{hyperref}
\usepackage[capitalize,nameinlink]{cleveref}
\usepackage{authblk}
\usepackage{bm}

\newcommand{\calP}{\mathcal P}

\newcommand{\ov}[1]{\overline{#1}}

\theoremstyle{plain}
\newtheorem{theorem}{Theorem}[section]
\newtheorem{lemma}{Lemma}[section]
\newtheorem{claim}{Claim}[section]

\newtheorem{corollary}{Corollary}[section]
\newtheorem{conjecture}{Conjecture}[section]
\newtheorem{question}{Question}[section]

\theoremstyle{definition}

\theoremstyle{definition}
\newtheorem{remark}{Remark}[section]

\title{$k$-arc-strong orientations of semicomplete digraphs}
\date{}
\author[1]{Tong Zhou}
\author[1,2]{J{\o}rgen Bang-Jensen}
\author[3]{Jia Zhou}
\author[1 *]{Jin Yan}
\affil[1]{School of Mathematics, Shandong University, Jinan 250100, China}
\affil[2]{Department of Mathematics and Computer Science, University of 

Southern Denmark, Odense DK-5230, Denmark}
\affil[3]{School of Mathematics and Statistics, Ningxia University, Yinchuan 750021, China}

\begin{document}
\maketitle
\footnotetext[1]{Corresponding author. E-mail address: yanj@sdu.edu.cn}
\begin{abstract}
Results by Jackson and Frank  imply  that every $2k$-arc-strong digraph $D$ contains a spanning $k$-arc-strong oriented subdigraph. This is best possible, even for very dense digraphs.
A digraph is {\bf semicomplete} if at least one of the arcs $xy,yx$ is present for every pair of  distinct vertices $x,y$.  A {\bf tournament} has exactly one of
$xy,yx$ for every such pair. Clearly every semicomplete digraph $D$ contains a spanning tournament $T$ which is obtained by deleting one arc from every 2-cycle of $D$.  
We prove that every $(2k-1)$-arc-strong semicomplete digraph on at least
$2k+1$ vertices contains a spanning $k$-arc-strong tournament.  Both bounds
$2k-1$ and $2k+1$ are best possible.  The proof uses  Frank's general orientation theorem for  graphs as well as counting arguments based on the semicomplete structure.
\end{abstract}
\noindent\textbf{Keywords:} connectivity; orientation; semicomplete digraphs

\medskip
\noindent\textbf{AMS subject classifications.} 05C20, 05C38, 05C40

\section{Introduction}

Every graph $G=(V,E)$ has a corresponding digraph 
$\stackrel{\leftrightarrow}{G}$, called the {\bf complete biorientation of $G$}, which is obtained by replacing every edge $xy$ of $E$ by the directed 
2-cycle $xyx$. A digraph $D$ is {\bf symmetric} if $D= \stackrel{\leftrightarrow}{G}$ for some graph $G$.  An {\bf oriented graph} is a digraph with no directed
$2$-cycle. By an {\bf orientation} of a digraph $D$ we mean a spanning
oriented subdigraph obtained by retaining exactly one arc from every directed
$2$-cycle of $D$. Hence an orientation of a graph $G$ is the same as orienting its corresponding symmetric digraph $\stackrel{\leftrightarrow}{G}$.

A digraph $D=(V,A)$ is  \textbf{$\bm{k}$-arc-strong} if it remains strongly connected after the removal of any subset of $k-1$ arcs. A classical result by Robbins \cite{robbinsAMM46} says that a graph has a strong orientation if and only if it is 2-edge-connected. Nash-Williams \cite{NashWilliams1960} generalized this result as follows.

\begin{theorem}\cite{NashWilliams1960}
\label{thm:nwthm}
A graph $G$ has a $k$-arc-strong orientation $\stackrel{\rightarrow}{G}$ if and only if $G$ is $2k$-edge-connected.
\end{theorem}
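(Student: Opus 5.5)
Necessity is immediate. Suppose $\stackrel{\rightarrow}{G}$ is a $k$-arc-strong orientation of $G$, and let $\emptyset\neq X\subsetneq V$. Then $\stackrel{\rightarrow}{G}$ has at least $k$ arcs leaving $X$ and at least $k$ arcs entering $X$. These arcs come from distinct edges of $G$ crossing the cut, so $d_G(X)\ge 2k$, and hence $G$ is $2k$-edge-connected.

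For sufficiency I would follow Nash-Williams' original route through Eulerian graphs, combined with an edge-splitting argument, in three steps.

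\textbf{Step 1: the Eulerian case.} If every vertex of $G$ has even degree, orient each edge along an Euler tour of each component. The resulting orientation satisfies $d^+(X)=d^-(X)$ for every $X\subseteq V$. Since $d^+(X)+d^-(X)=d_G(X)\ge 2k$, every cut then receives at least $k$ arcs in each direction, which is exactly $k$-arc-strong connectivity.

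\textbf{Step 2: reduction of the general case.} Take $G$ minimally $2k$-edge-connected; it suffices to treat this case, since extra edges can be oriented arbitrarily. Then reduce by induction on $|V|+|E|$ using Mader's splitting-off theorem. Pick a vertex $s$ of degree $d(s)\ne 3$ (degree-3 vertices cannot occur when $k\ge 2$, since $2k\ge 4$). If $d(s)$ is even, completely split off $s$ so that the resulting graph $G'$ on $V-s$ stays $2k$-edge-connected. By induction, $G'$ has a $k$-arc-strong orientation. Each split edge $uv$ came from a pair $us,sv$; orient $u\to s\to v$ when $uv$ was oriented $u\to v$. Every cut $X$ of $G$ with $s\notin X$ then receives at least as many arcs in each direction as the corresponding cut of $G'$, and the cut $\{s\}$ receives $d(s)/2\ge k$ arcs in each direction.

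\textbf{Step 3: odd degrees.} If some degrees are odd, add a new vertex $t$ adjacent to all odd-degree vertices. Orient the enlarged graph Eulerianly as in Step 1 and delete $t$. The main obstacle is that deleting $t$ can destroy cuts: a set $X$ may lose up to $|X\cap\mathrm{odd}|$ arcs, since cuts of $G$ do not directly control $d_t$. To handle this, I would instead first split off $t$ completely while preserving $2k$-edge-connectivity among the original vertices (Mader/Lov\'asz splitting, valid because $d(t)$ is even). This yields an Eulerian, $2k$-edge-connected multigraph $G+F$, where $F$ is the set of new edges. Its Euler orientation has $d^+(X)=d^-(X)\ge k$. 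The remaining difficulty is removing $F$ without dropping below $k$; this is precisely why Nash-Williams needed his odd-vertex pairing theorem. The key property is that $F$ can be chosen so that $|F\cap\delta(X)|\le d_G(X)-2k$ for every $X$, with the two directions balanced, and proving this is the hard step.

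Given the difficulty of that pairing, I expect the cleanest fallback to be the route the paper itself relies on: Frank's orientation theorem, or equivalently a submodular flow / Hakimi-type argument. Under it, a $k$-arc-strong orientation exists iff for every partition $\mathcal P$ of $V$ into $t\ge 2$ parts, the number of edges between parts is at least $kt$. For a $2k$-edge-connected $G$, each part has at least $2k$ boundary edges, and every crossing edge is counted twice in the sum, so the number of crossing edges is at least $2k\cdot t/2=kt$, as required.
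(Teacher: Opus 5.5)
The paper gives no proof of this statement; it is quoted from Nash-Williams, so your proposal has to stand on its own. Necessity, Step 1, and the lifting argument in Step 2 are correct.

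\textbf{Gap in the main line.} Step 2 needs a vertex of even degree, and you never show that one exists. Step 3 tries to handle odd degrees instead and stalls at Nash-Williams' odd-vertex pairing theorem. Splitting off $t$ only preserves \emph{lower} bounds on cuts, whereas you need the \emph{upper} bound $d_F(X)\le d_G(X)-2k$, and that is the hard content of the pairing theorem. The detour is unnecessary. The missing ingredient is Mader's lemma: a minimally $2k$-edge-connected multigraph has a vertex of degree exactly $2k$. It has a short uncrossing proof:
\begin{itemize}
\item Every edge lies in a tight set ($d_G(X)=2k$); take a tight $X$ with $|X|$ minimum.
\item If $|X|\ge 2$, then $G[X]$ has an edge $uv$, since otherwise $d_G(X)=\sum_{x\in X}d_G(x)\ge 2k|X|>2k$.
\item That edge lies in a tight $Y$ with $u\in Y$, $v\notin Y$.
\item If $X\cup Y\ne V$, submodularity gives $d_G(X\cap Y)\le d_G(X)+d_G(Y)-d_G(X\cup Y)\le 2k$.
\item If $X\cup Y=V$, then $Y\setminus X=\overline{X}$, and posimodularity gives $d_G(X\setminus Y)\le d_G(X)+d_G(Y)-d_G(\overline{X})=2k$.
\end{itemize}
Either way you get a tight nonempty proper subset of $X$, contradicting minimality. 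With $d_G(s)=2k$ even, Lov\'asz's complete splitting theorem plus your lifting argument completes an induction on $|V|$ over multigraphs. The hypothesis $d(s)\ne 3$ plays no role when $d(s)$ is even, and Step 3 can be dropped.

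\textbf{Problem with the fallback.} The partition criterion you attribute to Frank is, by your own last sentence, equivalent to $2k$-edge-connectivity. Quoting it as a black box therefore assumes the statement being proved. The version in the paper, Theorem~\ref{lem:frank}, does not reduce to partitions for free. Its condition runs over subpartitions whose members are intersections of pairwise co-disjoint families, and your count only treats the case $r_i=1$.

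The route does go through once this is handled. Take $h(X)=k$ on nonempty proper sets; this is crossing $G$-supermodular, with equality. After the same reductions as in the paper, you must show $s_t+\sum_i e_i\ge kR$:
\begin{itemize}
\item For each $i$ the nonempty sets $\overline{V_{i1}},\dots,\overline{V_{ir_i}}$ partition $V\setminus V_i$, so $\sum_j d_G(\overline{V_{ij}})=2e_i+d_G(V_i)\ge 2kr_i$, i.e.\ $e_i\ge kr_i-\tfrac12 d_G(V_i)$.
\item The $V_i$ are disjoint, so every edge counted in $\sum_i d_G(V_i)$ enters some $V_i$ and is counted at most twice. Hence $\sum_i d_G(V_i)\le 2s_t$.
\end{itemize}
Summing the first bound over $i$ and using the second gives $\sum_i e_i\ge kR-s_t$, which is the required inequality. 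This is the same bookkeeping the paper uses for Theorem~\ref{thm:main}, with cut sizes in place of the semicomplete pair count.
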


Theorem \ref{thm:nwthm}, which  implies that a $2k$-arc-strong 
 symmetric digraph has a $k$-arc-strong orientation, was generalized to orientations 
of  arbitrary digraphs by  Jackson \cite{Jackson1988} who proved
the following result which can also be deduced  from a more general orientation theorem of Frank  \cite{Frank1996}.

\begin{theorem}\label{thm:general}\cite{Jackson1988}
Every $2k$-arc-strong digraph has a spanning $k$-arc-strong orientation.
\end{theorem}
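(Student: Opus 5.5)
We derive Theorem~\ref{thm:general} from Frank's orientation theorem for crossing supermodular (in fact, here symmetric) demand functions, applied to the undirected graph formed by the 2-cycles. Let $D=(V,A)$ be $2k$-arc-strong. Split $A$ into the set $A_1$ of arcs not lying on any 2-cycle, which are kept in every orientation, and the set of 2-cycles. Let $G=(V,E)$ be the graph with one edge $xy$ for each 2-cycle $xyx$ of $D$. An orientation of $D$ is then exactly $A_1$ together with an orientation $\vec G$ of $G$.

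For $\emptyset\neq X\subsetneq V$, write $d^-_{A_1}(X)$ for the number of arcs of $A_1$ entering $X$. We want $d^-_{A_1}(X)+d^-_{\vec G}(X)\ge k$ for every such $X$. Define
\[ h(X)=\max\{0,\,k-d^-_{A_1}(X)\}. \]
The required condition is then that $\vec G$ covers $h$ in the sense $d^-_{\vec G}(X)\ge h(X)$. The function $k-d^-_{A_1}(X)$ is crossing supermodular, because $d^-_{A_1}$ is submodular. Frank's theorem on orientations covering a crossing supermodular $h$ (or Nash-Williams' odd-vertex-pairing style weak orientation theorem) says such an orientation exists if and only if, for every partition and co-partition $\{X_1,\dots,X_t\}$,
\[ e_G(\mathcal P)\ge \sum_i h(X_i). \]
Here $e_G(\mathcal P)$ counts the edges crossing the partition, and the sum uses the truncated form of $h$. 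One technicality is that truncation by $\max\{0,\cdot\}$ can break crossing supermodularity. I would therefore apply Frank's theorem to $p(X)=k-d^-_{A_1}(X)$ directly, noting that the partition condition for $p$ is implied by the one we verify below.

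The main step is to verify this condition. Fix a partition $X_1,\dots,X_t$ with $t\ge 2$. Since $D$ is $2k$-arc-strong, $d^-_D(X_i)=d^-_{A_1}(X_i)+d_G(X_i)\ge 2k$. Summing over $i$,
\[ \sum_i d^-_{A_1}(X_i)+2e_G(\mathcal P)\ge 2kt. \]
Using $\sum_i d^-_{A_1}(X_i)=\sum_i d^+_{A_1}(X_i)$, we get
\[ e_G(\mathcal P)\ge kt-\tfrac12\sum_i d^-_{A_1}(X_i)\ge \sum_i\bigl(k-d^-_{A_1}(X_i)\bigr). \]
The second inequality holds because $d^-_{A_1}\ge 0$ termwise: $kt-\tfrac12 s\ge kt-s$. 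For terms where $k-d^-_{A_1}(X_i)<0$, I would instead compare against the truncated sum restricted to the indices $i$ with $d^-_{A_1}(X_i)<k$. Applying the same calculation to the cuts $X_i$ with $i$ in this index set, using $d_G(X_i)\ge 2k-d^-_{A_1}(X_i)$, gives
\[ e_G(\mathcal P)\ge \tfrac12\sum_{i\in I} d_G(X_i)\ge \sum_{i\in I}\bigl(k-\tfrac12 d^-_{A_1}(X_i)\bigr), \]
which dominates $\sum_{i\in I} h(X_i)$. Co-partitions are handled symmetrically by replacing in-degrees with out-degrees.

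I expect the obstacle to be the precise form of Frank's theorem, namely handling the truncation and the crossing (rather than intersecting) family. If that becomes awkward, the fallback is a direct argument: take an orientation maximizing $\min_X d^-(X)$, then use a tight-set uncrossing and reversal argument in the style of Nash-Williams' proof.
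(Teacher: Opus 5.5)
There is a genuine gap, and it sits in the form of Frank's theorem you invoke. Your overall route is the right one: fix the one-way arcs, orient the graph $G$ of $2$-cycles so as to cover $p(X)=k-d^-_{A_1}(X)$, and exploit the factor-$\tfrac12$ slack from $2k$-arc-strength. This is the deduction from Frank's theorem that the paper alludes to; the paper gives no proof of its own. It is also the set-up the paper uses for Theorem~\ref{thm:main}.

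The trouble is the criterion ``$e_G(\mathcal P)\ge\sum_i p(X_i)$ and $e_G(\mathcal P)\ge\sum_i p(V\setminus X_i)$ for every partition''. It is necessary, but it is not sufficient for crossing supermodular functions that take negative values. Your $p$ is negative whenever $d^-_{A_1}(X)>k$.

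Here is a counterexample. On $V=\{a,b,c\}$ no two sets cross, so every set function is crossing supermodular. Let $G$ be the single edge $bc$, and put $p(\{a,b\})=p(\{a,c\})=1$, $p(\{b,c\})=-1$, and $p=0$ on all other sets. Every partition and co-partition inequality holds. Yet covering both $\{a,b\}$ and $\{a,c\}$ would require orienting $bc$ in both directions.

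The correct criterion is Theorem~\ref{lem:frank}. There the members $V_i$ of a subpartition are intersections of pairwise co-disjoint families $V_{i1},\dots,V_{ir_i}$. In the example, $\{a\}=\{a,b\}\cap\{a,c\}$ violates \eqref{eq:frank}. So as written you have verified only a necessary condition. Your truncation discussion does not touch this issue.

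The repair is your halving idea applied one level deeper.
\begin{itemize}
\item Discard any $V_{ij}=V$; this changes neither side of \eqref{eq:frank}. Then every $\overline{V_{ij}}$ is a nonempty proper subset, so $d^+_{A_1}(\overline{V_{ij}})+d_G(\overline{V_{ij}})\ge 2k$.
\item For fixed $i$ the sets $\overline{V_{ij}}$ partition $V\setminus V_i$. Hence $\sum_j d_G(\overline{V_{ij}})=2e_i+d_G(V_i)$.
\item Also $\sum_i d_G(V_i)\le 2s_t$.
\end{itemize}
Sum over all $i,j$ and use $p(V_{ij})=k-d^+_{A_1}(\overline{V_{ij}})$. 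This gives
\[
s_t+\sum_{i=1}^t e_i\ \ge\ \sum_{i,j}\Bigl(k-\tfrac12 d^+_{A_1}(\overline{V_{ij}})\Bigr)\ \ge\ \sum_{i,j}p(V_{ij}),
\]
which is \eqref{eq:frank} with $h=p$. The case $V_1=V$ is trivial, and no truncation is needed.

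For comparison, in the paper's proof of Theorem~\ref{thm:main} this factor-$\tfrac12$ slack is unavailable, since only $(2k-1)$-arc-strength is assumed. Semicompleteness is used instead to evaluate the left-hand side exactly (Claim~\ref{clm:count}).
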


Inspired by this result, Jackson and Thomassen made the following conjecture for the case of vertex-connectivity. A digraph $D=(V,A)$ is \textbf{$\bm{k}$-strong} if it has at least $k+1$ vertices and remains strong after the deletion of any subset of $k-1$ vertices. 

\begin{conjecture}\cite{Thomassen1989}
    Every $2k$-strong digraph has a spanning $k$-strong oriented subdigraph.
\end{conjecture}

For $k=1$ this follows from a result of Boesch and Tindell on strong orientations of mixed graphs \cite{boeschAMM87} but the conjecture is open even for $k=2$, except for symmetric digraphs where Thomassen proved the conjecture \cite{thomassenJCTB110}. For $k \ge 3$, Garamvölgyi et al. \cite{garamvolgyi2025highly} show that such a bound exists: every $320k^2$-connected undirected graph admits a $k$-strong orientation. This result applies to symmetric digraphs but does not extend to arbitrary digraphs.

While $2k$-edge-connectivity is clearly necessary in Theorem \ref{thm:nwthm}, implying that $2k$-arc-connectivity is best possible already for symmetric digraphs, 
the only lower bound on the arc-connectivity that holds for all digraphs having a $k$-arc-strong orientation is $k$, since the digraph may already be oriented. Thus, it is interesting to consider when $2k$-arc-connectivity is really necessary. 
\begin{remark}\label{trivial}
There are infinitely many digraphs (other than the symmetric ones) for which we cannot lower that arc-connectivity requirement. For example we can take two vertex disjoint $2k$-arc-strong digraphs $D_1,D_2$ and construct the digraph $D$ by adding $(2k-1)$ 2-cycles arbitrarily between the vertices of $D_1$ and $D_2$. Such a digraph is $(2k-1)$-arc-strong but contains no spanning oriented $k$-arc-strong digraph since the cut $(V(D_1),V(D_2))$ has less than $k$ arcs in one direction for every oriented subdigraph $H$ of $D$.
\end{remark}

Theorem \ref{thm:main} establishes that for semicomplete digraphs, the general $2k$-arc-connectivity sufficient condition can be improved to \(2k-1\), under the assumption \(n\ge 2k+1\). Both bounds are tight. 

\begin{theorem}\label{thm:main}
Let $k\ge1$, and let $D$ be a $(2k-1)$-arc-strong semicomplete digraph
on at least $2k+1$ vertices.  Then $D$ contains a spanning
$k$-arc-strong tournament.
\end{theorem}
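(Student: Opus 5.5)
The plan is to reduce the statement to Frank's orientation theorem in its cut-function form. Write $D$ as the disjoint union of an oriented part $D_0$ (the arcs not lying on $2$-cycles) and an undirected graph $G$ whose edges correspond to the $2$-cycles of $D$. A spanning tournament of $D$ is exactly $D_0$ plus an orientation of $G$. For $\emptyset\neq X\subsetneq V$ we therefore want an orientation with $d^-_{\vec G}(X)\ge k-d^-_{D_0}(X)$. Set
\[
h(X)=\max\{0,\;k-d^-_{D_0}(X)\}.
\]
Frank's theorem applies to this requirement because $h$ is crossing supermodular: $k$ minus the in-degree function of $D_0$ is crossing supermodular, and truncating at $0$ needs a short separate check, or one can use the version of the theorem for the function $k-d^-_{D_0}$ directly. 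The theorem then says that an orientation of $G$ with $d^-_{\vec G}(X)\ge h(X)$ for all $X$ exists if and only if, for every partition $\mathcal P=\{V_1,\dots,V_t\}$ of $V$ with $t\ge2$,
\[
e_G(\mathcal P)\ \ge\ \sum_{i} h(V_i)\quad\text{and}\quad e_G(\mathcal P)\ \ge\ \sum_i h(V\setminus V_i),
\]
where $e_G(\mathcal P)$ is the number of edges of $G$ between distinct parts. By symmetry (reverse all arcs) it suffices to prove the first inequality.

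To verify it, fix a partition and let $I=\{i: d^-_{D_0}(V_i)<k\}$. For $i\in I$ put $a_i=d^-_{D_0}(V_i)$ and let $g_i$ be the number of edges of $G$ incident to $V_i$ that leave it. Arc-strong connectivity gives $a_i+g_i=d^-_D(V_i)\ge 2k-1$, so $g_i\ge h(V_i)+(k-1)$. Summing, $2e_G(\mathcal P)\ge\sum_i g_i\ge \sum_{i\in I}h(V_i)+|I|(k-1)$. This is enough whenever $\sum_{i\in I}h(V_i)\le |I|(k-1)$, that is, unless many parts have $h(V_i)=k$ (parts with $a_i=0$). The deficiency comes from parts that are entered only through $2$-cycles. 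This is where semicompleteness is needed.

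The main obstacle is the set $J$ of parts with $a_i=0$ (and, more generally, parts with small $a_i$). Semicompleteness helps here. If $V_i,V_j$ are two parts with $d^-_{D_0}(V_i)=d^-_{D_0}(V_j)=0$, then every pair $x\in V_i$, $y\in V_j$ is joined by a $2$-cycle, since an arc in only one direction would be a $D_0$-arc entering one of them. So the parts in $J$ are pairwise completely joined in $G$, giving at least $\sum_{i<j\in J}|V_i||V_j|$ cross edges among them. Moreover a part $V_i$ in $J$ of size $s$ has every arc into it from outside lying on a $2$-cycle. I would count edges with $|V_i|(n-|V_i|)\ge$ cut edges. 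When $|J|\ge2$ the quadratic count of complete joins exceeds the needed $k|J|$, using $n\ge 2k+1$ and $g_i\ge 2k-1$. The case $|J|=1$ is absorbed by the slack $k-1$ from the other parts, or handled directly when $t=2$: there we need $e_G\ge k$, which holds because $g\ge 2k-1\ge k$. The delicate case is $t=2$ with both sides having small $a_i$ but nonzero. There I would check $g\ge h(V_1)+h(V_2)$ using $g\ge 2k-1-a_1$ together with $a_2$, again via the semicomplete pair count $|V_1||V_2|=g+a_1+a_2$. The bound $n\ge 2k+1$ excludes the extremal examples, such as the complete digraph on $2k$ vertices with only $2k-1$ arc-strength. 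I expect the bulk of the work to be this case analysis on small parts, singletons and near-singletons, where $|V_i|(n-|V_i|)$ is tight.
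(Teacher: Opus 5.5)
\textbf{There is a genuine gap at the reduction step.} For a crossing $G$-supermodular requirement, Frank's theorem does not reduce to the one-level inequalities $e_G(\mathcal P)\ge\sum_i h(V_i)$ and $e_G(\mathcal P)\ge\sum_i h(V\setminus V_i)$; these are only necessary. The correct condition, which is the one the paper uses, ranges over subpartitions $\{V_1,\dots,V_t\}$ in which each $V_i$ is the intersection of pairwise co-disjoint sets $V_{i1},\dots,V_{ir_i}$. It reads $s_t\ge\sum_i\bigl(\sum_j h(V_{ij})-e_i\bigr)$.

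Neither of your two choices of $h$ rescues the shortcut.
\begin{itemize}
\item With $h=k-d^-_{D_0}$, your inequalities are strictly weaker than Frank's condition. For $k=1$ both reduce to $e_G(\mathcal P)+|A_0(\mathcal P)|\ge t$, where $A_0(\mathcal P)$ is the set of one-way arcs between parts. This holds for every mixed graph whose underlying graph is $2$-edge-connected, even one in which some set has nothing entering it.
\item With $h^+=\max\{0,k-d^-_{D_0}\}$, the ``short separate check'' cannot succeed, because $h^+$ is not crossing $G$-supermodular, even for $(2k-1)$-arc-strong semicomplete $D$. Suppose $d^-_{D_0}(Y)=0$, each of $X$, $X\cap Y$, $X\cup Y$ receives at least $k$ arcs of $D_0$, and there is no $2$-cycle between $X\setminus Y$ and $Y\setminus X$. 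Then $h^+(X)+h^+(Y)=k>0=h^+(X\cap Y)+h^+(X\cup Y)+d_G(X,Y)$.

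A concrete instance: take four complete biorientations $A,B,C,E$ of order $2k$. Add one-way arcs from $C$ to $A\cup B$ and from $E$ to $B$, and $2$-cycles between the remaining block pairs. Set $X=A\cup B$ and $Y=A\cup C$.
\end{itemize}
So even a complete verification of your inequalities would not produce the orientation.

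\textbf{The verification itself is also only a sketch, with steps that fail as stated.}
\begin{itemize}
\item $\sum_{i<j\in J}|V_i||V_j|$ need not exceed $k|J|$ when the parts in $J$ are small: two singletons contribute a single pair.
\item The slack $(k-1)-h(V_i)=d^-_{D_0}(V_i)-1$ of the other parts can be $0$ for all of them.
\end{itemize}

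\textbf{What is missing is control of the co-disjoint families and their $e_i$ terms.} The paper gets this from an exact count that uses semicompleteness in full. Every vertex pair between distinct $\ov{V_{ij}}$, or between distinct $V_i$, is counted exactly once on the left-hand side. This gives
\[
s_t+\sum_i e_i+\sum_{i,j}d^+_{D_0}(\ov{V_{ij}})=\sum_iP_i+K+d^-_D(U).
\]
Two bounds then finish the argument.
\begin{itemize}
\item $P_i\ge kr_i-d(c_i)$ follows from convexity of $x^2$.
\item $\sum_i d(c_i)\le K+q$ follows from convexity of $d(c)+c^2/2$, together with the degree bounds $q\ge 2k-1$, $q\ge C(2k-C)$ and $q\ge C'(2k-C')$ for $q=d^-_D(U)$ and $U\ne V$.
\end{itemize}
The hypothesis $n\ge 2k+1$ enters through $d(1)=d(n-1)=k$.

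Your observation that parts with no entering $D_0$-arc are completely joined in $G$ is a special case of this count. The count has to be carried out on the two-level families, not on plain partitions.
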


\begin{remark}\label{rem:order-sharp}
The order $2k+1$ is best possible.  Indeed, the complete biorientation $\stackrel{\leftrightarrow}{K}_{2k}$ is $(2k-1)$-arc-strong, whereas every tournament
on $2k$ vertices has a vertex of outdegree at most $k-1$ and hence cannot be
$k$-arc-strong.
\end{remark}

\begin{remark}\label{rem:connectivity-sharp}
The arc-connectivity $2k-1$ is also best possible.  We recall the construction
used for the vertex-connectivity version in \cite{bangDM310}.  Let
$k\ge2$.  Let $U$ and $W$ be disjoint copies of the complete biorientation $\stackrel{\leftrightarrow}{K}_{2k-2}$, with
$U=\{u_1,\ldots,u_{2k-2}\},
W=\{w_1,\ldots,w_{2k-2}\}.$
Add the arcs $u_iw_i$ for $1\le i\le 2k-2$ and the arcs $w_i u_j$ for
$i\ne j$.  Finally, take an arbitrary nonempty tournament $C$, add all arcs
from $W$ to $C$, and add all arcs from $C$ to $U$.  Let $H$ be the resulting
semicomplete digraph.

It is easy to check that $H$ is $(2k-2)$-strong and hence also  $(2k-2)$-arc-strong.  Let $T$ be any spanning
tournament of $H$.  The tournament $T[U]$ has order $2k-2$, and hence some
$u_i\in U$ satisfies
$d_{T[U]}^+(u_i)\le k-2.$ The only possible out-neighbour of $u_i$ outside $U$ is $w_i$.  Consequently $d_T^+(u_i)\le k-1.$ Thus $T$ is not $k$-arc-strong.  Taking different nonempty tournaments $C$
gives an infinite family of $(2k-2)$-arc-strong semicomplete digraphs of
order at least $2k+1$ containing no spanning $k$-arc-strong tournament.
\end{remark}

The following result shows that the ($2k-1$)-arc-connectivity bound in Theorem~\ref{thm:main} fails even for digraphs that are almost semicomplete.
\begin{theorem}
\label{prop:deletepath}
 For every \(k\ge 2\), there exists an infinite family $\mathcal{D}$ of \((2k-1)\)-arc-strong digraphs, each of  which is obtained from a semicomplete digraph by deleting all arcs corresponding to a path of length at least $2k$ in the underlying complete graph, and with the property that  no digraph $D\in\mathcal{D}$ has a spanning \(k\)-arc-strong
oriented subdigraph. Moreover, \(V(D)\) can be partitioned into two sets
each of which induces a semicomplete digraph.
\end{theorem}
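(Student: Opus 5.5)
The plan is to give an explicit construction, as in Remark~\ref{trivial}, and to certify non-orientability by a counting obstruction. The obstruction is as follows. In any spanning oriented subdigraph $H$ of $D$, every single arc of $D$ survives and every $2$-cycle contributes exactly one arc. Hence for any partition $V_1,\dots,V_p$ of $V(D)$ we have
\[
\sum_{i=1}^p d_H^-(V_i)\;=\;\sum_{i=1}^p s^-(V_i)\;+\;c,
\]
where $s^-(V_i)$ is the number of single (non-$2$-cycle) arcs of $D$ entering $V_i$, and $c$ is the number of $2$-cycles of $D$ joining different parts. If $H$ is $k$-arc-strong, each $d_H^-(V_i)\ge k$. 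A refinement of this count is that each part can only use the $2$-cycles incident to it. So it suffices to exhibit a partition, or a small family of sets, for which
\[
\sum_i \max\{0,\,k-s^-(V_i)\}>c,
\]
or the analogous condition with out-degrees. The analogous condition may also be needed for a subfamily of the parts, since some $2$-cycles can only help one specific part.

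A simple two-set cut as in Remark~\ref{trivial} cannot work here. Since only the pairs of a path are non-adjacent, a cut $(X,V\setminus X)$ has at least $|X|(n-|X|)-(n-1)$ adjacent pairs, which is far more than $2k-1$ once $n$ is large. So the single arcs must be oriented ``acyclically'' between the blocks, so that they give no help to the parts that are in deficit. Concretely, I would take the two semicomplete sides $A$ and $B$ required by the ``moreover'' part, with large $|A|$ to obtain an infinite family. I would orient all but a bounded number of $A$--$B$ pairs as single arcs from $A$ to $B$. The arcs from $B$ to $A$ would then consist of exactly $2k-1$ $2$-cycles, together with no other $B\to A$ arcs. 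This makes $D$ exactly $(2k-1)$-arc-strong across $(A,B)$, provided $D[A]$ and $D[B]$ are themselves $(2k-1)$-arc-strong, for instance complete biorientations of large cliques.

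The path of length $\ge 2k$ is then deleted so that two vertices, or two small sets, end up with in-degree (respectively out-degree) coming only from these $2k-1$ $2$-cycles. Each $2$-cycle can serve only one of them, so the total deficiency $2k$ exceeds $2k-1$. I would place the path inside the ``interface'' vertices: it alternates between the endpoints of the $2$-cycles and the vertices whose other incident pairs are single arcs in the wrong direction. The path deletes precisely the pairs that would otherwise supply these vertices with the needed arcs. One then checks arc-strength $2k-1$ of $D$ by Menger's theorem, exhibiting $2k-1$ arc-disjoint paths between any two vertices through the large biorientated cliques and the $2k-1$ $2$-cycles.

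The main obstacle is exactly this balancing. The same $2k-1$ $2$-cycles must simultaneously (a) give every cut of $D$ at least $2k-1$ arcs in each direction, and (b) be shared between two deficient sets, each needing $k$ of them in the same direction in any orientation. Meanwhile the deletions must form a single path, and $D[A]$, $D[B]$ must stay semicomplete. I would first work out the case $k=2$ by hand to fix the placement of the path and the $2$-cycles, and then generalise by replacing each gadget vertex by a clique block, verifying that the deficiency count gives $2k$ against $c=2k-1$.
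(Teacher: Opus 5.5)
Your deficiency-counting framework is the right kind of certificate, but the proposal never exhibits a digraph. Worse, the design it sketches cannot yield one, because no obstruction certified by only two sets exists here.

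For sets $X_1,X_2$, let $s_j$ and $y_j$ be the numbers of single arcs and of $2$-cycles entering $X_j$, so $s_j+y_j\ge 2k-1$. Let $z\le\min\{y_1,y_2\}$ be the number of $2$-cycles joining $X_1\setminus X_2$ and $X_2\setminus X_1$; these are the only $2$-cycles that can serve just one of the two sets. Any spanning oriented subdigraph $H$ of $D$ satisfies $d_H^-(X_1)+d_H^-(X_2)\le s_1+s_2+y_1+y_2-z$. The right-hand side is below $2k$ only if $s_1=s_2=0$ and $y_1=y_2=z=2k-1$. In that case $X_1\cup X_2$ receives no arc of $D$, so $X_1\cup X_2=V$. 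Then $X_1\cap X_2$ receives no arc either, so it is empty. Hence $(X_1,X_2)$ is a cut spanned only by $2k-1$ $2$-cycles. That is exactly the trivial construction of Remark~\ref{trivial}, which you yourself excluded: every cut has at least $n-3>2k-1$ adjacent pairs once $n\ge 2k+3$. The out-degree variant reduces to this via $d_H^+(X)=d_H^-(V\setminus X)$.

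Your concrete set-up is also self-defeating. Suppose $D[A]$ and $D[B]$ are $(2k-1)$-arc-strong semicomplete digraphs on at least $2k+1$ vertices, e.g.\ large complete biorientations as you suggest. Then Theorem~\ref{thm:main} gives each a spanning $k$-arc-strong tournament. Orient $k$ of the $2k-1$ $2$-cycles from $B$ to $A$ and keep the many single arcs from $A$ to $B$. This is a spanning $k$-arc-strong oriented subdigraph of $D$, because every nonempty proper $X$ either splits $A$, splits $B$, or equals $A$ or $B$. Deleting path pairs between $A$ and $B$ does not change this.

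What is missing is an obstruction spread over many sets, in a construction where neither semicomplete side is highly connected. The paper proceeds as follows.
\begin{itemize}
\item Take $U=\{u_0,\dots,u_{2k}\}$ of size exactly $2k+1$ and delete the path $u_0u_1\cdots u_{2k}$ \emph{inside} $U$.
\item Make $D[W]$ complete with $|W|\ge 2k$.
\item Let every $U$--$W$ pair be a single arc from $U$ to $W$, except for the $2k-1$ single arcs $w_iu_i$ ($1\le i\le 2k-1$). Thus $d_D^-(U)=2k-1$ with no $2$-cycle across.
\end{itemize}
Summing over the singletons of $U$, any spanning oriented subdigraph has at most $\binom{2k+1}{2}-2k+(2k-1)=k(2k+1)-1$ arcs entering vertices of $U$, whereas $k$-arc-strength needs $k(2k+1)$. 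In your language, the $2k+1$ singletons have total deficiency $2k+(2k-1)(k-1)=2k^2-k+1$ against only $2k^2-k$ usable $2$-cycles.

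The two semicomplete sides are $W\cup U_{\rm even}$ and $U_{\rm odd}$, and neither is $(2k-1)$-arc-strong. The family is made infinite by enlarging $W$ while $U$ stays of size $2k+1$.
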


Theorem \ref{thm:main} also provides support to the following conjecture, by showing that the claim holds when we replace vertex-connectivity by arc-connectivity. A digraph is $k$-strong if it has at least $k+1$ vertices and remains strongly connected after the deletion of any subset of size $k-1$ of its vertices.

\begin{conjecture}\cite{bangDM310}
    Every $(2k-1)$-strong semicomplete digraph on at least $2k+1$ vertices contains a spanning $k$-strong tournament.
\end{conjecture}

To prove Theorem \ref{thm:main}, we apply Frank's general orientation theorem \cite{Frank1996} to a mixed graph derived from $D$, and crucially use the semicomplete structure to strengthen the sufficient arc-connectivity from $2k$ to $2k-1$. 
We construct this mixed graph by keeping all one-way arcs and replacing each directed 2-cycle with an undirected edge, then verify the required inequalities through a novel combination of a counting identity and discrete convexity arguments.

\section{Terminology and preliminaries}
Notation generally follows \cite{BangJensenGutin2009}. A set function $f:2^V\to\mathbb{R}$ is \textbf{submodular} if 
$
f(X)+f(Y)\ge f(X\cap Y)+f(X\cup Y)
$
holds for all subsets $X,Y\subseteq V$.
Let $G=(V,E)$ be an undirected graph. For any disjoint vertex sets $P,Q\subseteq V$, let $e_G(P,Q)$ denote the number of edges with one endpoint in $P$ and the other in $Q$. Based on this notation, for arbitrary $X,Y\subseteq V$ we define
$
d_G(X,Y)=e_G(X\setminus Y,Y\setminus X).
$
A function $h:2^V\to\mathbb{Z}\cup\{-\infty\}$ is \textbf{crossing $G$-supermodular} if the inequality
$
h(X)+h(Y)\le h(X\cap Y)+h(X\cup Y)+d_G(X,Y)
$
holds for every crossing pair $X,Y$.

For a digraph $D$ and $X\subseteq V(D)$, let
$
d_D^-(X)=|\{xy\in A(D):x\notin X,\ y\in X\}|
$
and
$d_D^+(X)=|\{xy\in A(D):x\in X,\ y\notin X\}|.$
For a vertex $v$, write $d_D^+(v)=d_D^+(\{v\})$ and
$d_D^-(v)=d_D^-(\{v\})$.  The arc-connectivity of a strong digraph $D$ is
$
\lambda(D)=
\min_{\varnothing\ne X\subsetneq V(D)}d_D^-(X).
$
Equivalently, by the directed arc version of Menger's theorem, $D$ is
$k$-arc-strong if and only if
$
d_D^-(X)\ge k
 (\varnothing\ne X\subsetneq V(D)).
$

A {\bf subpartition} of a set $V$ is a family of pairwise disjoint nonempty
subsets of $V$; its members need not cover $V$.  Two subsets $X,Y\subseteq V$
are {\bf crossing} if all four sets
$X\cap Y,
X\setminus Y,
Y\setminus X,
V\setminus(X\cup Y)$
are nonempty. For a set \(X\subseteq V\), we write $\ov{X}=V\setminus X.$
A family \(X_1,\ldots,X_r\) is {\bf pairwise co-disjoint} if
\(X_p\cup X_q=V\) whenever \(p\ne q\).  Equivalently, the complements
$\ov{X_1},\ldots,\ov{X_r}$
are pairwise disjoint.  In particular, if \(W=\bigcap_{j=1}^r X_j\), then
$
V\setminus W= V\setminus (\bigcap_{j=1}^r X_j)=\bigcup_{j=1}^r \ov{X_j}.$
Thus the nonempty members among \(\ov{X_1},\ldots,\ov{X_r}\) form a
partition of \(V\setminus W\).

Our proof of Theorem \ref{thm:main} is built upon Frank's general orientation theorem, which reduces the existence of the desired $k$-arc-strong tournament to a family of cut inequalities. To establish these inequalities via extremal counting arguments, we invoke a discrete convexity principle for integer partitions. The two lemmas are stated as follows.

\begin{lemma}[Discrete convexity, cf.~\cite{MarshallOlkinArnold2011}]
\label{lem:convex-majorization}
Let $f:\mathbb Z_{>0}\to\mathbb R$ be a function whose forward differences
$
\Delta f(s):=f(s+1)-f(s)$ for $s\ge1$
are nondecreasing.  Let $x_1,\ldots,x_m$ be positive integers with
$
x_1+\cdots+x_m=M.
$
Then
$
\sum_{\ell=1}^{m} f(x_\ell)
\le
f(M-m+1)+(m-1)f(1).
$\end{lemma}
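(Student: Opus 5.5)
The plan is a smoothing (exchange) argument that pushes any partition of $M$ into $m$ positive parts toward the extreme configuration $(M-m+1,1,\ldots,1)$ without decreasing $\sum_\ell f(x_\ell)$. First, the hypothesis that $\Delta f$ is nondecreasing gives the basic exchange inequality: if $1<a\le b$ (so $a-1\ge 1$), then
\[
f(a-1)+f(b+1)\ \ge\ f(a)+f(b).
\]
Indeed, this is equivalent to $f(b+1)-f(b)\ge f(a)-f(a-1)$, i.e. $\Delta f(b)\ge \Delta f(a-1)$. That holds because $a-1<b$ and $\Delta f$ is nondecreasing.

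Next I apply this repeatedly. Take a configuration $(x_1,\ldots,x_m)$ and suppose at least two entries exceed $1$. Let $x_p\ge x_q>1$ be two such entries, with $x_p$ a largest entry. Replace $x_q$ by $x_q-1$ and $x_p$ by $x_p+1$. All entries stay positive integers, the total $M$ is unchanged, and by the exchange inequality (with $a=x_q$, $b=x_p$) the sum $\sum f(x_\ell)$ does not decrease.

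To see that this terminates, I use the potential $\sum_\ell x_\ell^2$. Each move changes it by $2(x_p-x_q)+2>0$, and the potential is bounded because the number of positive integer $m$-tuples summing to $M$ is finite. Alternatively, I can simply induct on the number of entries different from $1$, each time draining one non-maximal entry down to $1$.

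When no further move is possible, at most one entry exceeds $1$. So the final configuration is, up to order, $(M-m+1,1,\ldots,1)$; this is the only such tuple with the correct sum, and it is positive since $M\ge m$. Chaining the inequalities then gives
\[
\sum_{\ell=1}^m f(x_\ell)\ \le\ f(M-m+1)+(m-1)f(1).
\]

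There is no serious obstacle. The only points needing care are keeping the decreased entry at least $1$, which is why I require $x_q>1$, and applying the monotonicity of $\Delta f$ in the correct direction. Ties $x_p=x_q$ are allowed, since the inequality $\Delta f(b)\ge\Delta f(a-1)$ still holds when $a=b$. The case $m=1$ is trivial.
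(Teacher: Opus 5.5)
Your argument is correct and complete. The paper gives no proof of this lemma; it only cites majorization theory (Marshall--Olkin--Arnold). The implicit argument there is that $(M-m+1,1,\ldots,1)$ majorizes every $m$-tuple of positive integers summing to $M$, and that $\sum_\ell f(x_\ell)$ is Schur-convex for convex $f$ (Karamata / Hardy--Littlewood--P\'olya).

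Your smoothing argument is effectively a direct proof of exactly this extremal case. Each move from a non-maximal entry $x_q>1$ to a largest entry $x_p$ is a reverse Robin Hood transfer, which moves the tuple up in the majorization order. Your exchange inequality $\Delta f(a-1)\le \Delta f(b)$ is the discrete convexity step that makes the sum nondecreasing along such moves.

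What your route buys is self-containment and an exact match with the hypothesis as stated. Here $f$ is defined only on $\mathbb{Z}_{>0}$ with nondecreasing forward differences, whereas Karamata-type inequalities are usually stated for convex functions on an interval. So the citation tacitly requires extending $f$, e.g.\ piecewise linearly, to a convex function on $[1,\infty)$, a step your argument avoids entirely.

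The majorization route gives the more general comparison between any two majorization-ordered vectors. The paper never needs that generality: it uses only the extremal configuration, plus the separate endpoint reduction in the corollary that follows the lemma.
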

\begin{corollary}\label{cor}
let \(a,b\) be integers with \(a\le b\), and
\(g(m)=f(M-m+1)+(m-1)f(1)\)
for $a\le m\le b.
$
Then
$
\max_{a\le m\le b} g(m)
=
\max\{g(a),g(b)\}.$
\end{corollary}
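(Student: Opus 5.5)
The plan is to show that $g$ is a discretely convex function of $m$ on $\{a,\ldots,b\}$, that is, that its forward differences are nondecreasing. A convex sequence attains its maximum over an integer interval at one of the endpoints. Throughout I assume, as is implicit in the definition of $g$ through $f$, that $M-m+1\ge 1$ for all $m\le b$, i.e.\ $b\le M$, so that every value of $f$ used below is defined.

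First I compute the forward difference of $g$. For $a\le m\le b-1$,
\[
\Delta g(m):=g(m+1)-g(m)=f(M-m)-f(M-m+1)+f(1)=f(1)-\Delta f(M-m).
\]
This is well defined because $M-m\ge M-b+1\ge 1$. As $m$ increases, $s=M-m$ decreases. Since $\Delta f$ is nondecreasing in $s$, the quantity $\Delta f(M-m)$ is nonincreasing in $m$. Hence $\Delta g(m)$ is nondecreasing in $m$ on $a\le m\le b-1$.

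Next I deduce the endpoint property. If $\Delta g(m)\le 0$ for every $a\le m\le b-1$, then $g$ is nonincreasing on $[a,b]$, and its maximum is $g(a)$. Otherwise, let $t$ be the least index with $\Delta g(t)>0$. Then $\Delta g(m)\le 0$ for $a\le m<t$, so $g$ is nonincreasing on $[a,t]$. By monotonicity of $\Delta g$ we have $\Delta g(m)\ge \Delta g(t)>0$ for $t\le m\le b-1$, so $g$ is increasing on $[t,b]$. Thus every value $g(m)$ is at most $g(a)$ when $m\le t$, and at most $g(b)$ when $m\ge t$. This gives $\max_{a\le m\le b}g(m)=\max\{g(a),g(b)\}$. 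The trivial case $a=b$ needs no argument.

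I expect no real obstacle here. The only point needing care is the index bookkeeping: checking that the difference of $g$ reduces exactly to $f(1)-\Delta f(M-m)$, and that all arguments of $f$ stay positive under the standing assumption $b\le M$.
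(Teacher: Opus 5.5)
Your proof is correct. The identity $\Delta g(m)=f(1)-\Delta f(M-m)$ shows that $g$ has nondecreasing forward differences on $\{a,\ldots,b\}$, and your split at the first index $t$ with $\Delta g(t)>0$ correctly places the maximum at an endpoint. The assumption $b\le M$ is exactly what keeps every argument of $f$ positive, and it holds in the paper's two uses: $a=1$, $b=M=C$ in Case 1, and $a=2$, $b=M=n$ in Case 2.

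The paper states this corollary without proof, treating it as immediate from discrete convexity. Your argument supplies that omitted justification in the natural way.
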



\begin{theorem}[Frank \cite{frank1993,Frank1996}]\label{lem:frank}
Let $G=(V,E)$ be an undirected graph, and $h:2^V\to\mathbb Z\cup\{-\infty\}$ a crossing $G$-supermodular set function with
$h(\varnothing)=h(V)=0$. Then $G$ has an orientation $\vec G$ satisfying
$d_{\vec G}^-(X)\ge h(X)$ for every $X\subseteq V$
if and only if
\begin{equation}\label{eq:frank}
s_t\ge
\sum_{i=1}^{t}
\left(
\sum_{j=1}^{r_i}h(V_{ij})-e_i
\right)
\end{equation}
for every subpartition $\{V_1,\ldots,V_t\}$ of $V$ where each $V_i$ is the intersection of a family of pairwise co-disjoint sets $V_{i1},\ldots,V_{ir_i}$, $s_t$ denotes the number of edges entering a $V_i$, and for a given $i$, $e_i$ denotes the number of edges connecting different sets $V_{ij}$.
\end{theorem}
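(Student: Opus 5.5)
The plan is to reduce the orientation problem to finding an integral in-degree vector that lies in the base polyhedron of a crossing supermodular function. The existence criterion for such a vector, together with the fact that two such base polyhedra have an integral common point whenever they meet, then gives \eqref{eq:frank}.

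\emph{Necessity.} Fix an orientation with $d^-_{\vec G}(X)\ge h(X)$ for all $X$, and a subpartition $V_1,\dots,V_t$ with $V_i=\bigcap_j V_{ij}$ for pairwise co-disjoint $V_{i1},\dots,V_{ir_i}$. The complements $\ov{V_{ij}}$ ($j=1,\dots,r_i$) partition $V\setminus V_i$. An arc entering some $V_{ij}$ either enters $V_i$, or joins two different sets $\ov{V_{ij}}$ and $\ov{V_{ij'}}$. Hence
\[
\sum_j h(V_{ij})\le \sum_j d^-_{\vec G}(V_{ij})\le d^-_{\vec G}(V_i)+e_i .
\]
Summing over $i$ gives \eqref{eq:frank}, since $\sum_i d^-_{\vec G}(V_i)$ counts exactly the edges entering some $V_i$, because the $V_i$ are disjoint.

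\emph{Sufficiency.} Write $i_G(X)$ for the number of edges spanned by $X$. For any orientation with in-degree vector $m$ we have $d^-_{\vec G}(X)=m(X)-i_G(X)$, so this quantity depends only on $m$. By Hakimi's theorem, an integral $m$ with $m(V)=|E|$ is the in-degree vector of some orientation iff $m(X)\ge i_G(X)$ for all $X$. So we need an integral $m$ with $m(V)=|E|$ lying in two base polyhedra:
\begin{itemize}
\item $B(i_G)$, where $i_G$ is fully supermodular;
\item $B(p)$, where $p(X):=h(X)+i_G(X)$.
\end{itemize}
The identity $i_G(X)+i_G(Y)+d_G(X,Y)=i_G(X\cap Y)+i_G(X\cup Y)$ turns crossing $G$-supermodularity of $h$ into crossing supermodularity of $p$, with $p(\varnothing)=0$ and $p(V)=|E|$.

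\emph{Step 1.} Apply the Fujishige--Frank description of base polyhedra of crossing supermodular functions. $B(p)$ is nonempty iff $p$ has a fully supermodular "closure" $\hat p$ with $\hat p(V)=|E|$, in which case $B(p)=B(\hat p)$. Here $\hat p(X)$ is the maximum of $\sum_i\sum_j p(X_{ij})$ over partitions of $X$ into sets $X_i$, where each $X_i$ is itself obtained as an intersection of co-disjoint sets $X_{ij}$. I would prove this by uncrossing: replace crossing pairs by their intersection and union in a tight family, first producing intersecting supermodularity via co-disjoint intersections, and then full supermodularity via disjoint unions.

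\emph{Step 2.} Use Edmonds' (Frank's) discrete sandwich / intersection theorem for two integral base polyhedra $B(\hat p)$ and $B(i_G)$ with common total $|E|$. They have a common integral point iff
\[
\hat p(X)+i_G(V\setminus X)\le |E| \quad\text{for every } X\subseteq V.
\]

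\emph{Step 3.} Expand this inequality. Take $X=\bigcup_i V_i$, with the partition of $X$ and the co-disjoint families given by Step~1, and substitute $p=h+i_G$. This gives
\[
\sum_{i,j}h(V_{ij})+\sum_{i,j}i_G(V_{ij})+i_G(\ov X)\le |E| .
\]
Count each edge's contribution to $\sum_{i,j}i_G(V_{ij})$. Co-disjointness makes every $V_{ij}$ contain $V\setminus V_i\supseteq \ov X$, so each edge is counted with a multiplicity determined by where its endpoints lie. After subtracting $|E|$, what remains should be $-(s_t-\sum_i e_i)$ plus nonnegative corrections that vanish in the extremal configuration. This yields exactly \eqref{eq:frank}.

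\emph{Main obstacle.} I expect the hard part to be Step~1 together with this final bookkeeping. The uncrossing has to be done so that the tight family keeps the "subpartition of intersections of co-disjoint families" shape. The edge count $\sum_j i_G(V_{ij})-i_G$ of the pieces must also come out to precisely $e_i$ plus the edges entering $V_i$. I would first carry out Step~3 for $t=1$, a single co-disjoint family, to fix the combinatorial identity, and then extend additively over the disjoint $V_i$.
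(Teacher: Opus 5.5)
Note first that the paper does not prove this statement; it quotes it from Frank. Your overall architecture is the standard route: Hakimi's in-degree criterion, the crossing supermodular function $p=h+i_G$, Fujishige's reduction of $B(p)$ to a fully supermodular function, and Edmonds' intersection theorem. Necessity is also fine, except that $\sum_i d^-_{\vec G}(V_i)$ is only \emph{at most} $s_t$, since edges between $\ov X$ and $X$ may point outward.

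The genuine gap is your formula for $\hat p$ in Step 1, which omits a correction term. The correct closure is
\[
\hat p(X)=\max\sum_{i}\Bigl(\sum_{j}p(X_{ij})-(r_i-1)\,p(V)\Bigr).
\]
This is because, for $x\in B(p)$ and co-disjoint $Y_1,\dots,Y_r$, the complements partition $V\setminus\bigcap_j Y_j$, so $x(\bigcap_j Y_j)=\sum_j x(Y_j)-(r-1)x(V)$.

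Without this term, $\hat p$ is not the closure of $p$: adjoining $V$ to a family would raise its value by $|E|$. Expanding your Step~3 inequality then gives
\[
\sum_i\Bigl(\sum_j h(V_{ij})-e_i\Bigr)+\sum_i(r_i-1)|E|\le s_t .
\]
So your ``nonnegative corrections'' are $\sum_i(r_i-1)|E|$, and they do not vanish whenever some $r_i\ge2$. Consequently \eqref{eq:frank} does not imply your Step~2 criterion, and sufficiency fails as written. The containment you invoke there is also false: $V_{ij}\supseteq V_i$, but $V_{ij}$ misses the nonempty block $\ov{V_{ij}}\subseteq V\setminus V_i$.

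With the correct truncation, the bookkeeping is an exact identity rather than an extremal argument. Put $B_j=\ov{V_{ij}}$. Then $i_G(V_{ij})=|E|-|\{e:e\cap B_j\ne\varnothing\}|$. Summing over $j$ counts edges between two different blocks twice, and every other edge not inside $V_i$ once, so
\[
\sum_j i_G(V_{ij})-(r_i-1)|E|=i_G(V_i)-e_i .
\]
Also $|E|-\sum_i i_G(V_i)-i_G(\ov X)=s_t$ for $X=\bigcup_iV_i$. Hence the condition $\hat p(X)+i_G(\ov X)\le|E|$ for all $X$ is literally equivalent to \eqref{eq:frank}.

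The case $X=V$ gives $\hat p(V)=|E|$, i.e.\ $B(p)\ne\varnothing$. Fujishige's theorem then yields $B(p)=B(\hat p)$; you should cite it rather than re-derive it by a sketched uncrossing. Edmonds' theorem then supplies the integral in-degree vector.
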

More precisely, an edge
connecting $V_{ip}$ and $V_{iq}$ is counted between the disjoint sets
$V_{ip}\setminus V_{iq}$ and $V_{iq}\setminus V_{ip}$; thus
$
e_i=
\sum_{1\le p<q\le r_i}
e_G\bigl(V_{ip}\setminus V_{iq},\,
          V_{iq}\setminus V_{ip}\bigr).
$

\section{Proof of \cref{thm:main} }

Let $D=(V, A)$ satisfy the hypothesis of \cref{thm:main} and set
$n=|V|$, that is, $D$ is a \((2k-1)\)-arc-strong semicomplete digraph with $n\geq{2k+1}$. We apply Theorem~\ref{lem:frank} to a mixed graph associated with $D$. We first construct this mixed graph and translate the $k$-arc-strong requirement into an orientation constraint via a set function $h(X)$, thus reducing the proof  to verifying the inequality (\ref{eq:frank}) required by Theorem~\ref{lem:frank}.

Let
$
A_0=\{xy\in A:yx\notin A\}
$
be the set of arcs of $D$ which do not belong to a directed $2$-cycle, and write
$
D_0=(V,A_0). $Let $G=(V,E)$ be the undirected graph in which $xy\in E$ precisely when both
$xy$ and $yx$ belong to $A$.  Thus
$
M=(V,A_0\cup E)
$
is the mixed graph associated with $D$, whose directed part is $D_0$ and
whose undirected part is $G$.
An orientation of $G$ is an orientation of the undirected part of $M$: the
arcs of $A_0$ remain fixed, and one direction is chosen from each directed
$2$-cycle of $D$.  Hence every orientation $\vec G$ of $G$ gives a spanning
tournament
$
T=(V,A_0\cup A(\vec G))
$
contained in $D$.

Define $h:2^V\to\mathbb Z$ by
\[
h(X)=
\begin{cases}
k-d_{D_0}^-(X),&\varnothing\ne X\subsetneq V,\\
0,&X=\varnothing\text{ or }X=V.
\end{cases}
\]
If $d_{\vec G}^-(X)\ge h(X)$ for every $X\subseteq V$, then
$
d_T^-(X)=d_{D_0}^-(X)+d_{\vec G}^-(X)\ge k
$
for every nonempty proper $X\subset V$.  Hence $T$ is $k$-arc-strong.

It remains to verify the premise of Theorem~\ref{lem:frank}, that is, that the function $h(X)$ is crossing $G$-supermodular.  The in-cut
function $X\mapsto d_{D_0}^-(X)$ is submodular (See e.g. Section 5.1 in \cite{BangJensenGutin2009}). Thus, for crossing sets
$X,Y$,
$
d_{D_0}^-(X)+d_{D_0}^-(Y)
\ge
d_{D_0}^-(X\cap Y)+d_{D_0}^-(X\cup Y).
$
Since all four sets in the definition of crossing are nonempty, the sets
$X,Y,X\cap Y,X\cup Y$ are nonempty and proper. Substituting the definition
of $h$ gives
$
h(X)+h(Y)\le h(X\cap Y)+h(X\cup Y).
$
Thus $h$ is crossing $G$-supermodular.

We now prove the inequalities in Theorem~\ref{lem:frank}.  We choose
$
V_1,\ldots,V_t\
$
and pairwise co-disjoint families
$
V_{i1},\ldots,V_{ir_i}$ for $1\le i\le t$. If \(V_i=V\) for some \(i\), then, since \(\calP\) is a subpartition, we have
\(t=1\) and \(V_1=V\). Hence \(V_{1j}=V\) for all \(j\), and
\eqref{eq:frank} holds with equality. Thus we may assume that $V_i\subsetneq V$ for $1\le i\le t$. If \(V_{ij}=V\) for some \(i,j\), then deleting this set does not change
\(\bigcap_j V_{ij}\), while its contribution to both \(h(V_{ij})\) and \(e_i\)
is zero. Hence we may also assume that
$
V_{ij}\subsetneq V$ for $1\le i\le t,\ 1\le j\le r_i$. Thus,
with the notation
$\ov{V_{ij}}=V\setminus V_{ij},$
each set \(\ov{V_{ij}}\) is nonempty. In particular, for
$c_i=|V_i|
$ we have
$1\le c_i\le n-1.$
Put
$
R=\sum_{i=1}^{t}r_i.
$
Substituting the definition of $h$ into \eqref{eq:frank}, and using
$d_{D_0}^-(V_{ij})=d_{D_0}^+(\ov{V_{ij}}),$
it is enough to prove
\begin{equation}\label{eq:certificate}
s_t+
\sum_{i=1}^{t}e_i+
\sum_{i=1}^{t}\sum_{j=1}^{r_i}
d_{D_0}^+(\ov{V_{ij}})
\ge kR.
\end{equation}

We first rewrite the left-hand side of \eqref{eq:certificate}. Set
\[
U=\bigcup_{i=1}^{t}V_i,\ q=d_D^-(U),\ K=\sum_{1\le i<\ell\le t}c_i c_\ell,\ \text{and}\ P_i=\sum_{1\le p<q\le r_i}|\ov{V_{ip}}||\ov{V_{iq}}|.
\]

Thus, \(K\) is the number of unordered vertex-pairs with endpoints in two
distinct members of the subpartition \(V_1,\ldots,V_t\). Let \(P_i\) denote the number of unordered vertex-pairs with one endpoint in
\(\ov{V_{ip}}\) and the other in \(\ov{V_{iq}}\), in all distinct
\(p,q\in[r_i]\).

We will verify the inequality (\ref{eq:frank}) required in Theorem~\ref{lem:frank} in the following steps. First, by Claim~\ref{clm:count}, the left-hand side of \eqref{eq:certificate} reduces to \(\sum_{i=1}^t P_i + K + q\), so it suffices to verify inequality (\ref{eq:numerical}), and then for each subpartition \(V_i\), Claim~\ref{clm:local-deficit} invokes Lemma~\ref{lem:convex-majorization} to derive the lower bound \(P_i \ge kr_i - d(c_i)\), where \(d(c_i)\) denotes the maximum local deficit over all co-disjoint partitions of \(V\setminus V_i\). Finally, Claim~\ref{clm:deficit} establishes \(\sum_{i=1}^{t} d(c_i) \le K+q\). Combining these two estimates immediately yields \eqref{eq:numerical}.

\begin{claim}\label{clm:count}
The following holds.
\begin{equation}\label{eq:count}
s_t+
\sum_{i=1}^{t}e_i+
\sum_{i=1}^{t}\sum_{j=1}^{r_i}
d_{D_0}^+(\ov{V_{ij}})
=
\sum_{i=1}^{t}P_i+K+q.
\end{equation}
\end{claim}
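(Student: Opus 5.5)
The plan is to prove \eqref{eq:count} by double counting over unordered pairs of distinct vertices. Since $D$ is semicomplete, every pair $\{x,y\}$ carries exactly one of three things: an arc $xy\in A_0$, an arc $yx\in A_0$, or an edge $xy\in E$ of $G$. I will read $s_t$, as in Frank's theorem, as the number of edges of $G$ having exactly one endpoint in some $V_i$, each edge counted once. Concretely, these are the edges of $G$ joining two distinct members $V_i,V_\ell$, together with those joining $U$ to $V\setminus U$. I will also use the reductions already made, under which every $\ov{V_{ij}}$ is nonempty. By co-disjointness, for each fixed $i$ the sets $\ov{V_{i1}},\ldots,\ov{V_{ir_i}}$ then form a partition of $V\setminus V_i$. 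Finally, since $V_{ip}\setminus V_{iq}=\ov{V_{iq}}$ for $p\neq q$, the quantity $e_i$ is exactly the number of $G$-edges joining two different parts $\ov{V_{ip}},\ov{V_{iq}}$.

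First I handle the terms belonging to a fixed index $i$, namely $e_i+\sum_j d_{D_0}^+(\ov{V_{ij}})$. An arc $xy\in A_0$ is counted in $d_{D_0}^+(\ov{V_{ij}})$ exactly when $x\in\ov{V_{ij}}$ and $y\notin\ov{V_{ij}}$. So either $y$ lies in another part $\ov{V_{ip}}$, or $y\in V_i$. Consider first a pair $\{x,y\}$ whose endpoints lie in two different parts $\ov{V_{ip}},\ov{V_{iq}}$. It contributes exactly $1$: once through $d^+_{D_0}$ of the tail's part if it is an $A_0$-arc, and once through $e_i$ if it is a $G$-edge. Summing over such pairs gives exactly $P_i$. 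What remains of the $i$-th terms is the number of $A_0$-arcs $xy$ with $x\in V\setminus V_i$ and $y\in V_i$, that is, $d^-_{D_0}(V_i)$.

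It therefore remains to show
\[
s_t+\sum_{i=1}^t d^-_{D_0}(V_i)=K+q,
\]
and I check this pair by pair.

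\emph{Pairs between distinct members.} Take $x\in V_\ell$ and $y\in V_i$ with $\ell\neq i$. An $A_0$-arc $xy$ is counted once, in $d^-_{D_0}(V_i)$, and not in $d^-_{D_0}(V_\ell)$. Symmetrically, an $A_0$-arc $yx$ is counted once, in $d^-_{D_0}(V_\ell)$. A $G$-edge is counted once, in $s_t$. Hence each such pair contributes exactly $1$, matching its contribution $1$ to $K$; these pairs contribute nothing to $q$.

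\emph{Pairs between $U$ and its complement.} Take $x\notin U$ and $y\in V_i$. An $A_0$-arc $xy$ is counted once on the left and once in $q=d^-_D(U)$. An $A_0$-arc $yx$ contributes $0$ to both sides. A $G$-edge, that is a $2$-cycle, contributes $1$ to $s_t$ and exactly one arc, namely $xy$, to $q$.

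\emph{All other pairs.} Pairs lying inside some $V_i$, or with both endpoints outside $U$, contribute $0$ to both sides.

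Summing over all pairs gives the displayed identity, and hence \eqref{eq:count}.

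The only delicate step is the bookkeeping. First, one must be precise about what $s_t$ counts; the identity fails, for instance, if an edge between $V_i$ and $V_\ell$ were counted twice. Second, one must check that a single $A_0$-arc between two different members $V_\ell,V_i$ is credited to exactly one index, namely that of its head. The first paragraph's reduction, in which everything internal to index $i$ collapses to $P_i$ plus $d^-_{D_0}(V_i)$, isolates both issues cleanly, so I expect the rest to be routine.
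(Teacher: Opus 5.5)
Your proof is correct and follows essentially the same route as the paper. For each fixed $i$ you collapse $e_i+\sum_j d^+_{D_0}(\ov{V_{ij}})$ to $P_i+d^-_{D_0}(V_i)$, using semicompleteness and the fact that $\ov{V_{i1}},\ldots,\ov{V_{ir_i}}$ partition $V\setminus V_i$, and then check $s_t+\sum_i d^-_{D_0}(V_i)=K+q$ pair by pair, with your reading of $s_t$ (each $G$-edge counted once, including edges between two members $V_i,V_\ell$) being exactly the convention the paper uses implicitly.
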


\begin{proof}
For a given $i$, recall $V_{ip}\cup V_{iq}=V$ for $p\ne q$, we have
$
V_{ip}\setminus V_{iq}=\ov{V_{iq}},
V_{iq}\setminus V_{ip}=\ov{V_{ip}}.$
Thus
$
e_i=
\sum_{1\le p<q\le r_i}
e_G\bigl(\ov{V_{ip}},\ov{V_{iq}}\bigr).
$ First, consider 
$
e_i+\sum_{j=1}^{r_i}d_{D_0}^+(\ov{V_{ij}}).
$
Since $D$ is semicomplete, every vertex pair between $\ov{V_{ip}}$ and $\ov{V_{iq}}$ for distinct $p,q$ exists and is counted exactly once in $e_i+\sum_{j=1}^{r_i}d_{D_0}^+(\ov{V_{ij}})$: a one-way arc is counted in its tail set, while a directed $2$-cycle is counted once in $e_i$. All such pairs yield
$
P_i=\sum_{1\le p<q\le r_i} |\ov{V_{ip}}|\,|\ov{V_{iq}}|.
$
The remaining terms in $e_i+\sum_{j=1}^{r_i}d_{D_0}^+(\ov{V_{ij}})$ are precisely the arcs of $D_0$ entering $V_i$.
Consequently,
$
\sum_{i=1}^{t}(e_i+
\sum_{j=1}^{r_i}d_{D_0}^+(\ov{V_{ij}}))
=
\sum_{i=1}^{t}(P_i+d_{D_0}^-(V_i)).
$

We proceed to evaluate the sum
$
s_t+\sum_{i=1}^{t}d_{D_0}^-(V_i)$. It enumerates all undirected edges and one-way arcs entering $V_1, V_2,\cdots, V_t.$
Since $V_1,\ldots, V_t$ are pairwise disjoint, the sum splits naturally into two parts: the edges and arcs between $V_i$, $V_l$ for $i \neq l$, and arcs entering
$
U=\bigcup_{i=1}^{t}V_i
$
from $V\setminus U$.
Every vertex pair with endpoints in $V_i$ and $V_\ell$ is counted exactly once in $
s_t+\sum_{i=1}^{t}d_{D_0}^-(V_i)
$: a one-way arc enters exactly one of the two sets, while a directed 2-cycle is counted once in $s_t$. These pairs are precisely enumerated by $K=\sum_{1\le i<\ell\le t}c_i c_\ell$.

The rest of the sum counts exactly the arcs of $D$ entering $U$ from $V\setminus U$, that is
$
q=d_D^-(U).
$
Combining these two parts gives
$
s_t+\sum_{i=1}^{t}d_{D_0}^-(V_i)=K+q.
$
Together with $
\sum_{i=1}^{t}(e_i+
\sum_{j=1}^{r_i}d_{D_0}^+(\ov{V_{ij}}))
=
\sum_{i=1}^{t}(P_i+d_{D_0}^-(V_i))$, this yields
$
s_t+
\sum_{i=1}^{t}e_i+
\sum_{i=1}^{t}\sum_{j=1}^{r_i}
d_{D_0}^+(\ov{V_{ij}})
=
\sum_{i=1}^{t}P_i+K+q,
$
as required.
\end{proof}
By Claim~\ref{clm:count}, the remaining task is to prove
\begin{equation} \label{eq:numerical}
    \sum_{i=1}^{t}P_i+K+q\ge kR.
\end{equation}

We prove this in two steps.  First, we obtain local lower bounds for $P_i$.  Then we show that the resulting deficits are bounded by
$K+q$.

\begin{claim}\label{clm:local-deficit}
For $1\le c\le n-1$, define
$
d(c)=
\max\left\{
k,(n-c)(2k+1-n+c)/2
\right\}.$
\footnote{The two values in the maximum correspond to the two extremal partitions of $V\setminus V_i$: when $r_i=1$, we have $V\setminus V_i=\ov{V_{i1}}$; when $r_i=n-c_i$, we have $|\ov{V_{ij}}|=1$ for all $j\in[r_i]$.}
Then $
P_i\ge kr_i-d(c_i)
$ for every $i$.

\end{claim}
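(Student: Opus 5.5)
Fix $i$ and write $c=c_i$, $r=r_i$, $x_j=|\ov{V_{ij}}|\ge 1$. Since the $\ov{V_{ij}}$ are pairwise disjoint and their union is $V\setminus V_i$, we have $\sum_j x_j=n-c=:M$ and $1\le r\le M$. We also have
\[
P_i=\sum_{p<q}x_px_q=\tfrac12\Bigl(M^2-\sum_j x_j^2\Bigr).
\]
So the claim reduces to the following purely numerical statement: for every composition $(x_1,\dots,x_r)$ of $M$ into $r$ positive parts,
\[
kr-\tfrac12 M^2+\tfrac12\sum_j x_j^2\le d(c).
\]
The strategy is to maximize the left side, first over compositions with $r$ fixed, and then over $r$.

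For fixed $r$, apply Lemma~\ref{lem:convex-majorization} with $f(s)=s^2/2$. Its forward differences $\Delta f(s)=s+\tfrac12$ are nondecreasing, so
\[
\tfrac12\sum_j x_j^2\le \tfrac12(M-r+1)^2+\tfrac12(r-1).
\]
Hence the left side is at most
\[
g(r):=kr-\tfrac12M^2+\tfrac12(M-r+1)^2+\tfrac12(r-1).
\]
Next I want $g$ maximized at an endpoint of $1\le r\le M$. Corollary~\ref{cor} gives this. To check it directly, $g$ is a quadratic in $r$ with positive leading coefficient $\tfrac12$, so it is convex in $r$, and its maximum on the integer interval $[1,M]$ is attained at $r=1$ or $r=M$.

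It remains to evaluate the two endpoints. At $r=1$ we get $g(1)=k-\tfrac12M^2+\tfrac12M^2=k$. At $r=M$ we get
\[
g(M)=kM-\tfrac12M^2+\tfrac12+\tfrac12(M-1)=kM-\tfrac12M^2+\tfrac12M=\tfrac12 M(2k+1-M).
\]
Substituting $M=n-c$ gives exactly $(n-c)(2k+1-n+c)/2$. Therefore $kr-P_i\le\max\{g(1),g(M)\}=d(c_i)$, which is the claim.

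The only delicate point is bookkeeping. The co-disjointness must ensure the $\ov{V_{ij}}$ are disjoint and nonempty; this holds by the earlier reduction that every $V_{ij}\ne V$. It must also ensure they exactly cover $V\setminus V_i$, which holds by the identity stated in the preliminaries. These facts give $r\le M$ and make the sum $\sum_j x_j=M$ exact. No semicompleteness is needed here; the claim is purely arithmetic.
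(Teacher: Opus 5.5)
Your proposal is correct and follows the paper's proof essentially step for step. The paper also writes $P_i=\frac12\bigl(m_i^2-\sum_j b_j^2\bigr)$ and bounds $\sum_j b_j^2\le (m_i-r_i+1)^2+(r_i-1)$ via Lemma~\ref{lem:convex-majorization}; it then maximizes the convex quadratic $F_c(r)=kr-\frac{(r-1)(2m-r)}{2}$, which is identical to your $g(r)$, at the endpoints $r=1$ and $r=m$, obtaining $k$ and $\frac{m(2k+1-m)}{2}$.
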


\begin{proof}
For every $i$, put
$
m_i=n-c_i, b_j=|\ov{V_{ij}}|$ for $1\le j\le r_i.
$
The sets
$
\ov{V_{i1}},\ldots,\ov{V_{ir_i}}
$
are nonempty and a partition of $V\setminus V_i$. Hence
$
\sum_{j=1}^{r_i} b_j=m_i.
$
Moreover,
$
P_i=
\sum_{1\le p<q\le r_i}b_pb_q
=
\frac12\left(m_i^2-\sum_{j=1}^{r_i}b_j^2\right).
$
Applying Lemma~\ref{lem:convex-majorization} to $f(x)=x^2$, the sum
$\sum_{j=1}^{r_i}b_j^2$ is maximized when the sizes are
$
m_i-r_i+1,1,\ldots,1.
$
Therefore
\begin{equation}\label{eq:Pi-lower}
P_i
=
\frac12\left(m_i^2-\sum_{j=1}^{r_i}b_j^2\right)
\ge
\frac12\left(m_i^2-(m_i-r_i+1)^2-(r_i-1)\right)
=
\frac{(r_i-1)(2m_i-r_i)}2 .
\end{equation}
Consequently,
$
kr_i-P_i
\le
kr_i-\frac{(r_i-1)(2m_i-r_i)}2.
$
For fixed \(c\), put \(m=n-c\) and define
$
F_c(r)
=
kr-\frac{(r-1)(2m-r)}2$ for $1\le r\le m.$
Then
$
F_c(r)
=
\frac{r^2}{2}+\left(k-m-\frac12\right)r+m,
$
so \(F_c\) is a convex quadratic function of \(r\). Hence its maximum on
the integer interval \(1\le r\le m\) is attained at one of the endpoints.
The two endpoint values are
$
F_c(1)=k
$
and
$
F_c(m)
=
\frac{m^2}{2}+
km-m^2-\frac{m}{2}+m
=
\frac{m(2k+1-m)}2
=
\frac{(n-c)(2k+1-n+c)}2.
$
Therefore
\[
kr_i-P_i\le
\max\left\{
k,\frac{(n-c_i)(2k+1-n+c_i)}2
\right\}
=d(c_i),
\]
as required.

\end{proof}

The remaining point is to show that the total deficit in
Claim~\ref{clm:local-deficit} is bounded by $K+q$.

\begin{claim}\label{clm:deficit-bound}
The inequality
$
\sum_{i=1}^{t}d(c_i)\le K+q
$ \label{clm:deficit}
holds.
\end{claim}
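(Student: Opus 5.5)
The plan is to compare each deficit $d(c_i)$ with the number of vertex pairs that $V_i$ contributes to $K$, and to charge whatever is left over to $q$. Set $u=|U|=\sum_{i=1}^t c_i$, $m_0=n-u\ge 0$, and $s_i=u-c_i$. Then $m_i=n-c_i=m_0+s_i$ and
\[
K=\tfrac12\sum_{i=1}^t c_i s_i .
\]
Throughout, the hypotheses $n\ge 2k+1$ and $\lambda(D)\ge 2k-1$ are used in the following two elementary facts.

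\textbf{Fact 1 (pointwise bound from $n\ge 2k+1$).} Since $c+m=n\ge 2k+1$ gives $2k+1-m\le c$, we have $m(2k+1-m)/2\le cm/2$. Also, for $c,m\ge1$ with $c+m=n$, we have $cm\ge n-1\ge 2k$, so $k\le cm/2$. Hence
\[
d(c_i)\le \tfrac12 c_i m_i=\tfrac12 c_i s_i+\tfrac12 c_i m_0 .
\]

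\textbf{Fact 2 (lower bound on $q$ when $U\ne V$).} Here $q=d_D^+(V\setminus U)$. Each of the $m_0$ outside vertices has out-degree at least $2k-1$, and $D[V\setminus U]$ has at most $m_0(m_0-1)$ arcs. Therefore
\[
q\ge m_0(2k-1)-m_0(m_0-1)=m_0(2k-m_0),
\]
and also $q\ge 2k-1$ by arc-strength.

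\emph{Case $U=V$.} Then $m_0=0$ and $q=0$. Summing Fact 1 over $i$ gives exactly $\sum_i d(c_i)\le K$, so this case is finished.

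\emph{Case $t=1$, $U\ne V$.} Here $K=0$, so we need $d(c_1)\le q$, where $m_1=m_0$.
\begin{itemize}
\item If $m_0\le 2k-1$, then $2(2k-m_0)\ge 2k+1-m_0$, so $m_0(2k-m_0)\ge m_0(2k+1-m_0)/2$. Together with $q\ge 2k-1\ge k$ and Fact 2, this gives $q\ge d(c_1)$.
\item If $m_0\ge 2k$, then $m_0(2k+1-m_0)/2\le k$, so $d(c_1)=k\le 2k-1\le q$.
\end{itemize}

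\emph{Case $t\ge2$, $U\ne V$ (main obstacle).} Summing Fact 1 naively gives $\sum_i d(c_i)\le K+\tfrac12 m_0u$. This is too weak in general, because $q$ need not be as large as $m_0u/2$. I would instead split each deficit at the threshold $s_i$ and use concavity.

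The quantity $\phi(m)=m(2k+1-m)/2$ is a concave quadratic in $m=m_0+s_i$. Its increment from $s_i$ to $m_0+s_i$ is at most $m_0(2k+1-2s_i-m_0)/2$. For indices with $s_i$ large (say $s_i\ge k$), this increment is nonpositive. For those indices, $d(c_i)\le\max\{k,\phi(s_i)\}$, and this is at most $c_is_i/2$ as soon as $c_i+s_i=u\ge 2k+1$. Otherwise $u\le 2k$, and I expect $q\ge m_0(2k-m_0)$ together with $n\ge2k+1$ to absorb the excess.

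The at most one or two indices with $s_i<k$ are those with $c_i$ close to $u$. Since $\sum_i s_i=(t-1)u$, there can be essentially only one such $V_i$ (the "big" part). For it, I would bound $d(c_i)-\tfrac12c_is_i$ by the one-part bound $d(c_i)\le m_0(2k-m_0)$ from the previous case, applied after merging the remaining parts into the outside. Concretely, the extra pairs between that $V_i$ and the other $V_\ell$ should compensate exactly as $K$ does.

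I would first try to make this "one big part plus small parts" decomposition rigorous, checking the boundary values $m_0=1$ and $m_0=2k-1$ by hand. If it fails, the fallback is Corollary~\ref{cor}-style endpoint analysis: view $\sum_i d(c_i)-K-q$ as a function of the part sizes with $u$ fixed, and show that its maximum occurs at $t=2$ with one part of size $1$. There the inequality reduces to the $t=1$ computation above.
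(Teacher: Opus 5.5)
Your Facts 1 and 2 are correct. Your treatment of $U=V$ is correct and neater than the paper's, and your case $t=1$ is exactly the paper's check that $q\ge d(C)$. The gap is the case $t\ge2$, $U\ne V$, which is the heart of the claim: you only sketch it, and the sketch rests on a false assertion. It is not true that essentially only one part has $s_i<k$. If $u\le k$ then $s_i=u-c_i<k$ for \emph{every} $i$, for example when all $V_i$ are singletons.

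Your fallback also targets the wrong extremal configuration. Let $\phi(c)=d(c)+c^2/2$ (the paper's $\phi$, not yours). It is convex, being a maximum of a convex quadratic and a linear function, and $\sum_i d(c_i)-K=\sum_i\phi(c_i)-u^2/2$. Lemma~\ref{lem:convex-majorization} and Corollary~\ref{cor} then put the maximum at $t=1$ or at $t=u$ (all singletons), not at $t=2$ with a singleton part. Concretely, take $m_0\ge 2k$ and $u=k\ge3$. The split $(u-1,1)$ gives $d(u-1)+d(1)-(u-1)=k+1$. All singletons give $ku-\binom{u}{2}=k(k+1)/2>k+1$.

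The all-singletons extreme cannot be closed with the bounds you have. It needs $q\ge u(2k+1-u)/2$, which equals $k(k+1)/2$ in the example above. There Fact 2 gives only $q\ge\max\{2k-1,\,m_0(2k-m_0)\}=2k-1$, which is smaller for $k\ge3$. So your expectation that Fact 2 and $n\ge2k+1$ absorb the excess when $u\le 2k$ is false.

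The missing ingredient is a lower bound on $q$ coming from $U$ itself. Every vertex of $U$ has in-degree at least $2k-1$ and at most $u-1$ in-neighbours inside $U$, so $q\ge u(2k-u)$. This is at least $u(2k+1-u)/2$ for $u\le 2k-1$. For $u=2k$ the target is $k\le 2k-1\le q$, and for $u\ge 2k+1$ it is nonpositive. With this bound, the convexity reduction leaves exactly two cases: your $t=1$ computation and the all-singletons case. That is how the paper proves the claim.
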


\begin{proof}

Define $\phi(c)=d(c)+c^2/2$, and set $C=|U|=\sum_{i=1}^t c_i$. Observe that
$
K=\sum_{1\le i<\ell\le t}c_ic_\ell=(C^2-\sum_{i=1}^t c_i^2)/2 .
$
we have
\begin{equation}\label{eq:deficit-rewrite}
\sum_{i=1}^{t}d(c_i)-K
=
\sum_{i=1}^{t}\left(d(c_i)+\frac{c_i^2}{2}\right)
-\frac{C^2}{2}=\sum_{i=1}^{t}\phi(ci)-\frac{C^2}{2}.
\end{equation}
Recall $d(c)=kr_i-P_i\le
\max\left\{
k,\frac{(n-c_i)(2k+1-n+c_i)}2
\right\}$, we have
\[
\begin{aligned}
\phi(c)
&=d(c)+\frac{c^2}{2} =
\max\left\{
k+\frac{c^2}{2},
\frac{(n-c)(2k+1-n+c)+c^2}{2}\right\} \\
&=\max\left\{
	k+\frac{c^2}{2},
	\frac{n(2k+1-n)+(2n-2k-1)c}{2}
	\right\}.
\end{aligned}
\]
Thus $\phi$ is convex on the positive integers.
We prove \eqref{clm:deficit} separately for $U\subsetneq V$ and $U=V$.

\medskip
\noindent\textbf{Case 1: $U\subsetneq V$}, and then $C<n$.  

Since $V_1,\ldots,V_t$ are nonempty and pairwise disjoint,
and $
1\le t\le C.
$
Applying Lemma~\ref{lem:convex-majorization} to the convex function $\phi$
gives
$
\sum_{i=1}^{t}\phi(c_i)
\le
\max\{\phi(C),C\phi(1)\}.
$
Using \eqref{eq:deficit-rewrite}, we obtain
\begin{equation}\label{eq:two-extremes}
	\sum_{i=1}^{t}d(c_i)-K
	\le
	\max\left\{
	\phi(C)-\frac{C^2}{2},
	C\phi(1)-\frac{C^2}{2}
	\right\}.
\end{equation}
We now evaluate the two terms on the right-hand side.
For the second term, Recall $d(c)
\le
\max\left\{
k,(n-c_i)(2k+1-n+c_i)/2
\right\}$ and $\phi(c)=d(c)+c^2/2$.
\[\phi(1)=d(1)+1/2
=
\max\left\{
k,\frac{(n-1)(2k+2-n)}2
\right\}+1/2=k+1/2.\]
since $n\ge 2k+1$, $(n-1)(2k+2-n)/2\le k$ holds,
It follows that
\[
C\phi(1)-\frac{C^2}{2}
=
C\left(k+\frac12\right)-\frac{C^2}{2}
=
\frac{C(2k+1-C)}2.
\]
Consequently,
\begin{equation}\label{eq:extremes}
	\sum_{i=1}^{t}d(c_i)-K
	\le
	\max\left\{
	d(C),\frac{C(2k+1-C)}2
	\right\}.
\end{equation}

It remains to prove that $q$ is greater than the two terms in \eqref{eq:extremes}.
We first prove
\begin{equation}\label{eq:q-C}
q\ge \frac{C(2k+1-C)}2.
\end{equation}
Since $U$ is a nonempty proper subset of $V$ and
$D$ is $(2k-1)$-arc-strong, then $q=d_D^-(U)\ge2k-1$ holds. In particular, every vertex has indegree and outdegree at least $2k-1$.
Summing indegrees over $U$, we obtain
$
C(2k-1)
\le
\sum_{u\in U}d_D^-(u)
\le
|A(D[U])|+q.
$
As $D[U]$ has at most $C(C-1)$ arcs, this gives
$
q\ge C(2k-C).
$
If $C\le 2k-1$, then
\[
C(2k-C)-\frac{C(2k+1-C)}2
=
\frac{C(2k-1-C)}2\ge0,
\]
and \eqref{eq:q-C} follows.  If $C=2k$, then the right-hand side of
\eqref{eq:q-C} is $k$, while $q\ge2k-1\ge k$.  If $C\ge2k+1$, then the right-hand side of
\eqref{eq:q-C} is nonpositive.  Hence \eqref{eq:q-C} holds in all cases.

We next prove
$q\ge d(C)$.\label{eq:q-dC}
Put
$
C'=n-C=|V\setminus U|.
$
Then
$
q=d_D^-(U)=d_D^+(V\setminus U).
$
Summing outdegrees over $V\setminus U$, and arguing as above, gives
\[
C'(2k-1)
\le
\sum_{v\in V\setminus U}d_D^+(v)
\le
|A(D[V\setminus U])|+q
\le
C'(C'-1)+q.
\]
Thus
$
q\ge C'(2k-C').
$
Recall
$
d(C)=
\max\left\{
k,(C'(2k+1-C')/2
\right\}.$
The first term is bounded by $q$, since $q\ge2k-1\ge k$. For the second term, the same comparison as above, with
$C'$ in place of $C$. Combining \eqref{eq:extremes}, \eqref{eq:q-C}, and $q\ge2k-1$, we get
$
\sum_{i=1}^{t}d(c_i)-K\le q.
$ This proves the claim when $U\subsetneq V$.

\medskip
\noindent\textbf{Case 2: $U=V$}, and then $C=n, q=0.$

It suffices to prove
$
\sum_{i=1}^{t}d(c_i)-K\le0.
$ If $t=1$, then $U=V$ implies $V_1=V$.  Since
$V_1=\bigcap_{j=1}^{r_1}V_{1j}$, this forces $V_{1j}=V$ for all $j$.
Consequently $h(V_{1j})=h(V)=0$, $e_1=0$, and $s_t=0$, so
\eqref{eq:frank} holds with equality. Hence we may assume $t\ge2$.
Since $V_1,\ldots,V_t$ are nonempty and pairwise disjoint and their union is
$V$, we have $2\le t\le n$.

Applying Lemma~\ref{lem:convex-majorization} to $\phi$, with $C=n$, gives
\[
\sum_{i=1}^{t}\phi(c_i)
\le
\max\{\phi(n-1)+\phi(1),n\phi(1)\}.
\]
By \eqref{eq:deficit-rewrite} we have
\[\sum_{i=1}^{t}d(c_i)-K
\le
\max\left\{
\phi(n-1)+\phi(1)-\frac{n^2}{2},
n\phi(1)-\frac{n^2}{2}
\right\}.
\]
Now $d(1)=k$, as above, and $d(n-1)=
\max\left\{
k,\frac{1\cdot 2k}{2}
\right\}
=k.
$
Hence
\[
\phi(n-1)+\phi(1)-\frac{n^2}{2}
=
d(n-1)+d(1)-(n-1)
=
2k-n+1\le0,
\]
where the last inequality uses $n\ge2k+1$.  Similarly,
\[
n\phi(1)-\frac{n^2}{2}
=
n\left(k+\frac12\right)-\frac{n^2}{2}
=
\frac{n(2k+1-n)}2\le0,
\]
again by $n\ge2k+1$.  Therefore
$\sum_{i=1}^{t}d(c_i)-K\le0$.
\end{proof}

We now finish the proof of \eqref{eq:numerical}.  By
Claim~\ref{clm:local-deficit} and Claim~\ref{clm:deficit}, the following holds.
\[
\sum_{i=1}^{t}P_i+K+q
\ge
kR-\sum_{i=1}^{t}d(c_i)+K+q
\ge
kR.
\]
Consequently, this proves \eqref{eq:certificate}. Therefore the condidition (\ref{eq:frank}) in Theorem~\ref{lem:frank} is satisfied and we conclude that
$G$ has an orientation $\vec G$ such that
$
d_{\vec G}^-(X)\ge k-d_{D_0}^-(X)
$
for every nonempty proper subset $X\subset V$.  The resulting spanning
tournament
$
T=(V,A_0\cup A(\vec G))
$
satisfies
$
d_T^-(X)
=
d_{D_0}^-(X)+d_{\vec G}^-(X)
\ge k
$
for every nonempty proper $X\subset V$.  Thus $T$ is $k$-arc-strong.
This completes the proof of  \cref{thm:main}.
\hfill $\Box$

\section{Proof of Theorem \ref{prop:deletepath}}

The construction which we used in Remark \ref{trivial} to show that $(2k-1)$-arc-strong connectivity is not in general sufficient to guarantee a $k$-arc-strong orientation is 'trivial' in the sense that it contains a vertex cut $(X,V\setminus X)$ with just $2k-1$ connections (formed by 2-cycles) between the two sets. Clearly this implies that every orientation of those 2-cycles will result in either too few arcs in one direction between $X$ and $V\setminus X$. The construction below avoids this and still shows that $2k$-arc-connectivity is best possible in Theorem \ref{thm:general}. We will construct an infinite family of 
$(2k-1)$-arc-strong digraphs, each derived from a semicomplete digraph by removing an underlying path of length at least 
$2k$. Each digraph $D$ in the family admits a vertex partition into two induced semicomplete subdigraphs but $D$ has no spanning $k$-arc-strong oriented subdigraph.
\begin{proof}
Let $
U=\{u_0,u_1,\ldots,u_{2k}\}, W=\{w_1,\ldots,w_{2k}\}$.
We construct \(D\) as follows; see Figure~\ref{fig:construction} for an illustration. The subdigraph \(D[U]\) is obtained from the complete digraph on \(U\) by
deleting both arcs corresponding precisely to the edges of the underlying
path
$
u_0u_1\cdots u_{2k},
$ that is, we delete both arcs between each consecutive pair
$u_0u_1,\ u_1u_2,\ \ldots,\ u_{2k-1}u_{2k}$. The subdigraph $D[W]$ is the complete digraph on $W$.
Between \(U\) and \(W\), first put all arcs from \(U\) to \(W\). Then,
for each \(1\le i\le 2k-1\), replace the arc \(u_iw_i\) by the opposite
arc \(w_i u_i\). Thus the only arcs from \(W\) to \(U\) are 
$w_i u_i$ for $1\le i\le 2k-1$.
In particular, $d_D^-(U)=2k-1$.

\begin{figure}[htbp]
\centering
\includegraphics[width=0.8\textwidth]{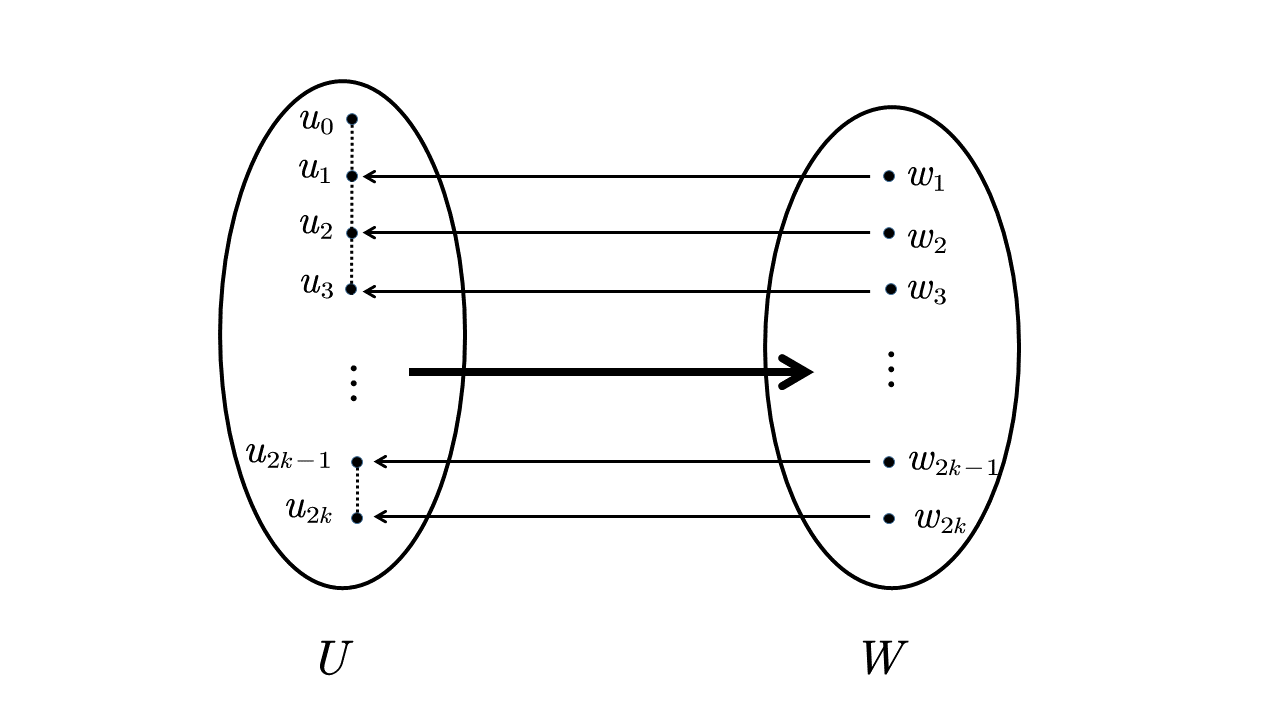}
\caption{The digraph $D$ in Theorem~\ref{prop:deletepath}. 
Dashed lines inside $U$ represent the underlying path edges whose bidirectional arcs are removed.
All arcs between $U$ and $W$ are directed from $U$ to $W$ (marked by thick arrows), except for the reverse arcs $w_iu_i$ ($1\le i\le 2k-1$) shown as thin arrows.
The induced subdigraph $D[W]$ is complete.} 
\label{fig:construction}
\end{figure}

It is easily to check  \(D\) is \((2k-1)\)-arc-strong.
We now show that \(D\) has no spanning \(k\)-arc-strong oriented
subdigraph. Suppose, to the contrary, that \(T\) is such a subdigraph.
Then
$
d_T^-(u_i)\ge k$
for $0\le i\le 2k$.
Hence $
\sum_{i=0}^{2k} d_T^-(u_i)\ge k(2k+1)$. On the other hand, since \(T\) is oriented, the number of arcs of \(T\)
entering vertices of \(U\) is at most the number of available underlying
pairs inside \(U\), plus the number of fixed arcs from \(W\) to \(U\).
Thus it is at most
\[
\binom{2k+1}{2}-2k+(2k-1)
= k(2k+1)-1,
\] which yields a contradiction. Hence \(D\) has no spanning \(k\)-arc-strong oriented
subdigraph.

Finally, partition the path vertices by parity: $U_{\rm even}=\{u_i:i\text{ is even}\}, U_{\rm odd}=\{u_i:i\text{ is odd}\}$. Set $S_1=W\cup U_{\rm even}, S_2=U_{\rm odd}$.
It is easy to check 
$D[S_1]$
and
$D[S_2]$ are semicomplete digraphs. Moreover, by increasing the size of \(W\) while keeping \(|W|\ge 2k\), the same construction gives an infinite family of such digraphs that satisfy the proposition.
\end{proof}
\section{Conclusion}
We proved that every $(2k-1)$-arc-strong semicomplete digraph on at least
$2k+1$ vertices contains a spanning $k$-arc-strong tournament. It is then natural to ask how far the semicomplete assumption can be relaxed. Proposition~\ref{prop:deletepath} shows that the conclusion fails even when these pairs are precisely the edges of a path. This suggests considering the more restricted case in which the nonadjacent pairs form a matching. In
particular, it remains natural to determine whether every $(2k-1)$-arc-strong
digraph that can be made semicomplete by adding arcs between the ends of a
matching, or even a perfect matching, contains a spanning $k$-arc-strong oriented subdigraph.
Let $\mathcal{D}_M$ denote the class of digraphs that can be made semicomplete by adding the arcs of a perfect matching.
\begin{question}
    Does every $(2k-1)$-arc-strong digraph in $\mathcal{D}_M$ contain a spanning $k$-arc-strong oriented subdigraph?
\end{question}

\end{document}